\setlist[enumerate]{label=\textup{(\alph*)}, leftmargin=\widthof{(a)}+\the\labelsep}
\setlist[itemize]{leftmargin=\widthof{$\bullet$}+\the\labelsep}
\DeclareMathOperator{\Gal}{Gal}
\newcommand{\GL}{\mathrm{GL}}
\newcommand{\PGL}{\mathrm{PGL}}
\newcommand{\GA}{\mathrm{GA}}
\newcommand{\FF}{\mathbb{F}}
\newcommand{\ZZ}{\mathbb{Z}}
\newcommand{\II}{\mathcal{I}}
\newcommand{\fP}{\mathfrak{P}}
\newcommand{\fS}{\mathfrak{S}}
\newcommand{\bs}{\backslash}
\newcommand{\cross}{\times}
\newcommand{\size}[1]{\lvert #1 \rvert}
\newcommand{\Size}[1]{\left\lvert #1 \right\rvert}
\newcommand{\union}{\cup}
\newcommand{\isom}{\cong}
\newcommand{\floor}[1]{\lfloor #1 \rfloor}
\newcommand{\Floor}[1]{\left\lfloor #1 \right\rfloor}
\newcommand{\ignore}[1]{}
\theoremstyle{theorem}
\newtheorem{theorem}{Theorem}
\newtheorem{lem}{Lemma}
\newtheorem{cor}{Corollary}
\theoremstyle{definition}
\newtheorem*{definition}{Definition}
\begin{document}
\title{Visibly irreducible polynomials over finite fields}
\markright{Visibly irreducible polynomials}
\author{Evan M. O'Dorney}
\maketitle
\begin{abstract}
Lenstra, in this \textsc{Monthly}, has pointed out that a cubic over $\FF_5 = \ZZ/5\ZZ$ of the form $(x-a)(x-b)(x-c) + \lambda(x-d)(x-e)$, where $\{a,b,c,d,e\}$ is some permutation of $\{0,1,2,3,4\}$, is irreducible because every element of $\FF_5$ is a root of one summand but not the other. We classify polynomials over finite fields that admit an irreducibility proof with this structure.
\end{abstract}

\section{Introduction.}
In a past note in this \textsc{Monthly} \cite{Lenstra5}, Lenstra relates how he was trying to set an examination problem of a standard genre---namely, factoring a polynomial over a finite field---whose answer could be verified by a quick, humanly comprehensible argument. He chose the following polynomial:
\begin{equation*}
  f(x) = x^3 - 3x^2 - x - 3 \in \FF_5[x].
\end{equation*}
(Here and throughout this article, $\FF_q$ denotes the field with $q$ elements.) Built in was the following solution:
\begin{equation} \label{eq:f partic}
  f(x) = (x^3 - x) - (3x^2 + 3) = x(x + 1)(x - 1) - 3(x + 2)(x - 2),
\end{equation}
which shows that $f$ is in fact irreducible: for if it factored, it would have to have a linear factor, and each of the five possible linear factors over $\FF_5$ divides one but not the other of the two summands of \eqref{eq:f partic}. The same proof applies to any polynomial over $\FF_5$ of the form
\begin{equation} \label{eq:cubics f5}
  (x - a)(x - b)(x - c) + \lambda (x - d)(x - e),
\end{equation}
where $\{a,b,c,d,e\}$ are the elements $\{0,1,2,3,4\} = \FF_5$ in some order and $\lambda \in \FF_5^\cross$ is a nonzero constant. Lenstra proves that every monic irreducible cubic over $\FF_5$ has the form \eqref{eq:cubics f5} in a unique way, up to permuting the factors in each term, and gives a pleasant algorithm for finding $a$, $b$, $c$, $d$, $e$, and $\lambda$.

In this article we address the natural question (\cite{Lenstra5}, p.~818) of the extent to which this phenomenon extends to other degrees of polynomials and other fields.

We can say at once that the phenomenon is not restricted to $\FF_5$. Taking $\FF_2$, the simplest of all fields, and writing the quadratic polynomial
\begin{equation*}
  f(x) = x^2 + x + 1 = (x)^2 + (x + 1),
\end{equation*}
we see that $f$ is irreducible, because the two possible linear factors $x, x + 1$ each divide one but not the other of the two terms of the decomposition. This is not the only such irreducibility proof for this polynomial: equally effective are
\begin{equation*}
  f(x) = (x^2 + 1) + x = (x+1)^2 + x
\end{equation*}
and
\begin{equation*}
  f(x) = (x^2 + x) + 1 = x(x+1) + 1.
\end{equation*}
The same argument applies to cubics over $\FF_2$ such as
\begin{equation*}
  f(x) = x^3 + x + 1 = (x)^3 + (x + 1) = x(x+1)^2 + 1 = \cdots.
\end{equation*}
But if we write a quartic in a form like
\begin{equation*}
  f(x) = x^4 + x + 1 = (x)^4 + (x + 1),
\end{equation*}
the irreducibility is no longer clear. We see that $f$ has no linear factor, but a quartic polynomial could still factor as the product of two quadratics. If we know (somehow) that $x^2 + x + 1$ is the only irreducible quadratic over $\FF_2$, then we can write
\begin{equation*}
  f(x) = x^4 + x + 1 = x(x + 1)(x^2 + x + 1) + 1,
\end{equation*}
and now it \emph{is} visible that $f$ is not divisible by either of the linear factors $x$, $x + 1$ or the quadratic factor $x^2 + x + 1$, and hence $f$ must be irreducible.

Motivated by the foregoing examples, we make the following definition:

\begin{definition}
A \emph{visibly irreducible decomposition (VID)} of degree $d \geq 2$ over the finite field $\FF_q$ is a sum $f_1(x) + f_2(x) + \cdots + f_r(x) = f(x)$ of $r \geq 2$ nonzero polynomials $f_i(x) \in \FF_q[x]$ of degree at most $d$ with the following properties:
\begin{enumerate}[(V{I}D-1), leftmargin=\widthof{(V{I}D-1)}+\the\labelsep]
  \item\label{it:vid div} Every irreducible polynomial $p(x)$ of degree not exceeding $d/2$ (known as an \emph{operative factor}) divides all but exactly one of the $f_i(x)$. This makes it visible that $p(x) \nmid f(x)$.
  \item\label{it:vid deg} Exactly one of the $f_i$ actually has degree $d$, the others having degree less than $d$. This makes it visible that $f$ has degree exactly $d$.
\end{enumerate}
\end{definition}
Condition \ref{it:vid deg} may seem a bit arbitrary, but it ensures that the sum $f(x)$ has degree exactly $d$, without the need to check the sum of the leading coefficients. Without it, $f(x)$ could be an irreducible polynomial of any degree from $\floor{d/2} + 1$ to $d$ inclusive---or could be a constant! Condition \ref{it:vid deg} is also motivated by symmetry considerations, as will be explained in Section \ref{sec:symmetry}. At the end of this article we will briefly note what happens if it is removed.

Our main result is the following determination of which polynomials admit a VID.

\begin{theorem} \label{thm:main}~
\begin{enumerate}
\item \label{it:main all} For the following pairs $(q,d)$, \textsc{every} irreducible polynomial of degree $d$ over $\FF_q$ admits a VID:
\begin{itemize}
  \item $(2,2)$, $(3,2)$
  \item $(2,3)$, $(3,3)$, $(4,3)$, $(5,3)$
  \item $(2,4)$
  \item $(2,5)$
  \item $(2,6)$
  \item $(2,7)$.
\end{itemize}
\ignore{That is to say,
\begin{itemize}
  \item Quadratics over $\FF_2$ and $\FF_3$
  \item Cubics over $\FF_2$, $\FF_3$, $\FF_4$, and $\FF_5$
  \item Quartics over $\FF_2$
  \item Quintics over $\FF_2$
  \item Sextics over $\FF_2$
  \item Septimics over $\FF_2$.
\end{itemize}
}
\item \label{it:main half}
For $(q,d) = (3,5)$, exactly \textsc{half} of all irreducible quintics over $\FF_3$ admit a VID.
\item \label{it:main none}
For all other $q$ and $d$, \textsc{no} irreducible polynomial admits a VID.
\end{enumerate}
\end{theorem}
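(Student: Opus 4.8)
The plan is to boil everything down to one degree inequality and then deal with the finitely many cases that survive. Write $m=\floor{d/2}$, so the operative factors are exactly the monic irreducible polynomials of degree at most $m$; let $P(x)\in\FF_q[x]$ be their product and $N=N(q,d)=\deg P=\sum_{k=1}^{m}k\,I_q(k)$, where $I_q(k)$ is the number of monic irreducibles of degree $k$ over $\FF_q$ (so $N=q$ when $m=1$, $N=q^2$ when $m=2$, $N=q^3+q^2-q$ when $m=3$). From the axioms I would extract: for any VID $f=f_1+\dots+f_r$, assign to each operative $p$ the unique index $\sigma(p)$ with $p\nmid f_{\sigma(p)}$ and put $Q_i=\prod_{\sigma(p)=i}p$; then $P/Q_i$ is a product of distinct irreducibles dividing $f_i$, so $\deg f_i\ge N-\deg Q_i$, and summing over $i$ with $\sum_i\deg Q_i=N$ gives $\sum_i\deg f_i\ge(r-1)N$, whereas condition~\ref{it:vid deg} gives $\sum_i\deg f_i\le d+(r-1)(d-1)$. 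Hence $(r-1)(N-d+1)\le d$, so $N\le 2d-1$, and $r=2$ whenever $N>(3d-2)/2$. A short computation with the formula for $N$ then shows $N\le 2d-1$ holds exactly for the eleven pairs of part~\ref{it:main all} together with $(q,d)=(3,5)$. Since this argument never used irreducibility of $f$, part~\ref{it:main none} is already done: for all other $(q,d)$, no polynomial whatsoever admits a VID.

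For the five pairs with $N\le d$ — namely $(2,2),(2,3),(3,3),(2,4),(2,5)$ — I would give one uniform construction. For $f$ monic irreducible of degree $d$, take $f_1=f\bmod P$, so that $f_2=f-f_1$ is a nonzero multiple of $P$, necessarily of degree $d$ (as $\deg f_1<N\le d$); this verifies \ref{it:vid deg}. Here $P\nmid f$ (else $f=Ph$ would be reducible, $P$ having at least two irreducible factors), so $f_1\ne 0$. And each operative $p$ divides $f_2$ but satisfies $f_1\equiv f\not\equiv 0\pmod p$, verifying \ref{it:vid div}. Thus $f=f_1+f_2$ is a VID, and every irreducible polynomial of these degrees has one. (For $(2,4)$ this recovers $x^4+x+1=x(x+1)(x^2+x+1)+1$ from the introduction.)

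The remaining pairs have $d<N\le 2d-1$: $(3,2),(4,3),(5,3)$ for part~\ref{it:main all}, $(3,5)$ for part~\ref{it:main half}, and $(2,6),(2,7)$. Here $r=2$ is forced, and the slack $\sum_i\deg f_i-N$ is at most $2d-1-N$, so a VID is nearly rigid: with $g_i=\prod\{p\text{ operative}:p\mid f_i\}$ one has $g_1g_2=P$ and $\sum_i(\deg f_i-\deg g_i)\le 2d-1-N$. When this vanishes — for $(3,2),(5,3),(3,5)$ — each $f_i$ is a scalar times $g_i$, so, normalizing $f$ monic, $f=g_1+\lambda g_2$, where $(g_1,g_2)$ runs over factorizations $P=g_1g_2$ into products of operative factors with $\deg g_1=d$ and $\lambda$ over $\FF_q^\cross$; conversely every such $g_1+\lambda g_2$ is a VID. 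I would then count these forms against $I_q(d)$: for $(5,3)$ there are $\binom{5}{3}\cdot4=40=I_5(3)$ of them, and Lenstra's uniqueness makes the form-to-polynomial map injective, hence bijective, so every irreducible cubic over $\FF_5$ has a VID; for $(3,2)$ there are $\binom{3}{2}\cdot2=6$ forms onto the $I_3(2)=3$ irreducible quadratics, and one checks all three occur; for $(3,5)$ there are $(\binom{3}{3}\binom{3}{1}+\binom{3}{1}\binom{3}{2})\cdot2=24=I_3(5)/2$ forms, so part~\ref{it:main half} reduces to showing these $24$ polynomials are pairwise distinct, after which precisely $24$ of the $48$ irreducible quintics admit a VID. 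When the slack is positive — $(4,3),(2,6),(2,7)$ — the only extra shapes are $f_i=\mu\,g_i\,\ell$ with $\mu\in\FF_q^\cross$ and $\ell$ a linear polynomial already dividing $g_i$ (if $\ell$ did not divide $g_i$ it would divide $g_{3-i}$, hence $f$, impossible); the shape list stays finite, and since $\FF_2$ has only $I_2(6)=9$ and $I_2(7)=18$ irreducibles of those degrees and $\FF_4$ only $I_4(3)=20$ cubics, I would finish by enumerating the shapes and checking coverage in a table.

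The main obstacle is this last step: proving that every irreducible polynomial really is attained (and, for $(3,5)$, attained exactly once). For $(5,3)$ that is Lenstra's theorem, a genuine argument rather than a formality; for $(3,5)$ the parallel fact to establish is that a monic irreducible quintic over $\FF_3$ admits at most one VID, i.e.\ that the $24$ forms $g_1+\lambda g_2$ are distinct. I expect the cleanest route there is to reconstruct $\{g_1,g_2\}$, and then $\lambda$, from $f$ — for instance by extracting the operative factors of the degree-$d$ summand via greatest common divisors — and to argue this reconstruction is forced. For $(3,2),(4,3),(2,6),(2,7)$ the obstacle is only bookkeeping: arranging the finitely many VID shapes so that the coverage check becomes transparent.
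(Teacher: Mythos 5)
Your degree bound is the paper's Lemma \ref{lem:basic bd} in disguise: your $N$ equals $\bigl\lvert \bigcup_{n=1}^{\floor{d/2}} \FF_{q^n} \bigr\rvert$, and $(r-1)(N-d+1)\le d$ is exactly $dr\ge (r-1)(N+1)$, so part \ref{it:main none} and the reduction to a finite list of pairs are correct (you derive it from divisibility of each $f_i$ by $P/Q_i$ rather than from the root multiplicity of $f_1\cdots f_r$, which amounts to the same thing). Your uniform construction $f=(f\bmod P)+(f-(f\bmod P))$ for the five pairs with $N\le d$ is correct and is a genuinely different, symmetry-free route to existence in those cases, where the paper instead exhibits a shape and invokes transitivity of the $\PGL_2(\FF_q)$-action on $\II(q,d)$. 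The rigidity analysis for the remaining pairs ($r=2$ forced; each $f_i$ a scalar times $g_i$, up to one repeated linear factor when there is slack) is also sound and reproduces the paper's classification of shapes.

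The gap is that the theorem's hardest claims are precisely the ones you defer. For $(4,3)$, $(2,6)$, $(2,7)$ you must show that the finitely many shapes \emph{cover} all $20$, $9$, $18$ irreducibles, and for $(3,5)$ you must show the $24$ polynomials $g_1+\lambda g_2$ are pairwise \emph{distinct} (otherwise fewer than half the quintics would get a VID); you correctly reduce part \ref{it:main half} and these cases of part \ref{it:main all} to such statements but prove none of them, proposing ``a table'' or an unspecified reconstruction of $(g_1,g_2,\lambda)$ from $f$. The missing idea is the one the paper is built around: the set of values of all instances of a shape is a nonempty $\Gamma=\PGL_2(\FF_q)$-invariant set of irreducibles (after homogenizing, so that the degree condition becomes divisibility by $Y$), every orbit has size $\size{\Gamma}/\size{\Gamma_F}$ with $\Gamma_F$ cyclic of order dividing $d$, and comparing the number of instances with the orbit sizes forces coverage and distinctness simultaneously --- e.g.\ for $(3,5)$ there are $24$ instances and every orbit has size $24$, so the values form one full orbit and distinctness is automatic; the residual ambiguity between the two orbits for $(2,6)$ and $(2,7)$ is then settled by Lenstra's ``compare and factor'' trick rather than enumeration. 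As written, your outline closes into a proof only after several nontrivial finite verifications are actually carried out, and for $(3,5)$ in particular the distinctness of the $24$ forms is an essential unproved step, not bookkeeping.
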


\section{No VID's for large fields or high degrees.} We begin with the proof of Theorem \ref{thm:main}\ref{it:main none}, which restricts the $(q,d)$ pairs to be considered to a finite list. The method is quite straightforward.
\label{sec:bounds}

\begin{lem} \label{lem:basic bd}
If a VID $f_1 + \cdots + f_r$ of degree $d$ exists over $\FF_q$, then
\begin{equation} \label{eq:basic bd}
  dr \geq (r-1) \left[1 + \bigg\lvert \bigcup_{n=1}^{\floor{d/2}} \FF_{q^n} \bigg\rvert \right],
\end{equation}
where the union is taken within the algebraic closure $\overline{\FF_q}$ (which contains a unique isomorphic copy of $\FF_{q^n}$ for each $n$).
\end{lem}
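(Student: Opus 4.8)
The plan is a double-counting argument over the set $S := \bigcup_{n=1}^{\floor{d/2}} \FF_{q^n} \subseteq \overline{\FF_q}$, counting incidences between the elements of $S$ and the summands $f_1, \dots, f_r$. The first ingredient is the correspondence between operative factors and elements of $S$: for $\alpha \in \overline{\FF_q}$, its minimal polynomial $p_\alpha$ over $\FF_q$ is monic irreducible of some degree $n_0$, and $\alpha$ lies in $\FF_{q^m}$ exactly when $n_0 \mid m$; hence $\alpha \in S$ if and only if $n_0 \le \floor{d/2}$, i.e.\ if and only if $p_\alpha$ is an operative factor. Moreover, since $p_\alpha$ is the minimal polynomial of $\alpha$, for any $g \in \FF_q[x]$ we have $g(\alpha) = 0$ precisely when $p_\alpha \mid g$.

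Next I would fix $\alpha \in S$ and apply \ref{it:vid div} to the operative factor $p_\alpha$: it divides all but exactly one of $f_1, \dots, f_r$. By the equivalence just noted, this says that $\alpha$ is a root of exactly $r - 1$ of the $f_i$. Summing over all $\alpha \in S$, the number of pairs $(\alpha, i)$ with $\alpha \in S$ and $f_i(\alpha) = 0$ equals $(r-1)\,\size{S}$.

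Finally I would estimate that same count by summing over $i$ instead. Each $f_i$ is nonzero (part of the definition of a VID), so it has at most $\deg f_i$ roots in $\overline{\FF_q}$, and in particular at most $\deg f_i$ of them lie in $S$. By \ref{it:vid deg}, one of the $f_i$ has degree $d$ while the other $r - 1$ have degree at most $d - 1$, so $\sum_{i=1}^r \deg f_i \le d + (r-1)(d-1) = dr - (r-1)$. Comparing the two expressions for the number of incidences gives $(r-1)\,\size{S} \le dr - (r-1)$, which rearranges to \eqref{eq:basic bd}.

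The argument is otherwise mechanical, and there is no real obstacle; the only point that needs care is the correspondence in the first step, which is precisely what allows the divisibility hypothesis \ref{it:vid div} to be reinterpreted as a clean count of roots lying in $S$, and which pins down that $S$ (rather than some larger or smaller set) is the right index set for the double count.
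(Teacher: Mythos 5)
Your proof is correct and is essentially the same as the paper's: the paper phrases the count as ``each $\xi \in \bigcup_n \FF_{q^n}$ is a root of the product $f_1\cdots f_r$ with multiplicity at least $r-1$, and the product has degree at most $d+(d-1)(r-1)$,'' which is exactly your incidence double-count in a different wrapper. Both arguments rest on the same two observations (minimal polynomials of elements of the union are precisely the operative factors, and VID-2 caps the total degree at $dr-(r-1)$), so there is nothing further to add.
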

\begin{proof}
Let $\xi \in \FF_{q^n}$, $1 \leq n \leq \floor{d/2}$. The minimal polynomial $p(x)$ of $\xi$ is irreducible of degree $n \leq \floor{d/2}$ and thus must divide all but one of the $f_i$; therefore $\xi$ is a root of the product $f_1f_2\cdots f_r$ of multiplicity at least $r-1$. But this is a product of one factor of degree $d$ and $r-1$ factors of degree at most $d-1$, so
\begin{equation*}
  d + (d-1)(r-1) \geq (r-1) \bigg\lvert \bigcup_{n=1}^{\floor{d/2}} \FF_{q^n} \bigg\rvert,
\end{equation*}
which simplifies to \eqref{eq:basic bd}.
\end{proof}

\begin{lem} \label{lem:when bd}
The bound \eqref{eq:basic bd} can hold only for the pairs $(q,d)$ mentioned in Theorem \ref{thm:main}\ref{it:main all},\ref{it:main half}.
\end{lem}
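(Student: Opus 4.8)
The plan is to eliminate $r$ from \eqref{eq:basic bd}, reducing it to an inequality in $q$ and $d$ alone, and then to test that inequality against a short list of small cases. Set $N = N(q,d) = \Size{\bigcup_{n=1}^{\floor{d/2}} \FF_{q^n}}$. Since $r \geq 2$ we have $r/(r-1) \leq 2$, so \eqref{eq:basic bd} gives $1 + N \leq dr/(r-1) \leq 2d$, that is, $N \leq 2d - 1$. This is the inequality I would work with, and it no longer involves $r$ at all.

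Next I would feed in the crude lower bound $N \geq \size{\FF_{q^{\floor{d/2}}}} = q^{\floor{d/2}}$ (the field $\FF_{q^{\floor{d/2}}}$ being one of those in the union), so that $q^{\floor{d/2}} \leq 2d - 1$, and hence $2^{\floor{d/2}} \leq 2d - 1$ since $q \geq 2$. A short induction in steps of $2$, seeded at $d = 10$ and $d = 11$ (where already $2^5 = 32 > 2d - 1$), shows this fails for all $d \geq 10$; it also fails at $d = 8$ (where $2^4 = 16 > 15$). So only $d \in \{2,3,4,5,6,7,9\}$ remain to be considered, and for each such $d$ only finitely many $q$.

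Finally, for each of those seven values of $d$ I would compute $N(q,d)$ exactly as a function of $q$, using the fact that $\FF_{q^m} \subseteq \FF_{q^n}$ precisely when $m \mid n$, so that the union telescopes: $N = q$ for $d \in \{2,3\}$; $N = q^2$ for $d \in \{4,5\}$; $N = q^3 + q^2 - q$ for $d \in \{6,7\}$; and $N = q^4 + q^3 - q$ for $d = 9$. Imposing $N \leq 2d - 1$ then pins down $q$ in each case --- e.g.\ $d = 6$ and $d = 7$ both force $q = 2$, $d = 5$ leaves $q \in \{2,3\}$, while $d = 9$ forces $q = 2$ but then $N = 22 > 17$, so $d = 9$ contributes nothing. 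I expect this last step to be routine; the only mildly delicate point is the boundary behavior near $d = 8, 9$, where the crude bound $q^{\floor{d/2}} \leq 2d - 1$ is nearly tight and one must count $N$ exactly rather than via $q^{\floor{d/2}}$. Collecting the survivors gives precisely the pairs $(2,2)$, $(3,2)$, $(2,3)$, $(3,3)$, $(4,3)$, $(5,3)$, $(2,4)$, $(2,5)$, $(3,5)$, $(2,6)$, $(2,7)$ --- the list in Theorem \ref{thm:main}\ref{it:main all} together with $(3,5)$ from \ref{it:main half} --- which is what the lemma asserts. (Every $q$ appearing is a prime power, so the standing requirement that $q$ be a prime power removes nothing further.)
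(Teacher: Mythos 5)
Your proposal is correct and follows essentially the same route as the paper: reduce to $r=2$ (the bound being weakest there), use $q^{\floor{d/2}} \leq 2d-1$ to cut down to $d \leq 7$ or $d=9$, kill $d=9$ by computing the union exactly, and then bound $q$ for each surviving $d$. The only cosmetic difference is that you compute $N(q,d)$ exactly for all remaining $d$, whereas the paper reuses the crude bound $q \leq \lfloor (2d-1)^{1/\floor{d/2}} \rfloor$ for $2 \leq d \leq 7$; both yield the same list.
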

\begin{proof}
The bound is weakest when $r = 2$, so it suffices to determine when it can hold in this case. We have
\begin{equation} \label{eq:q_bound}
  2d \geq 1 + \bigg\lvert \bigcup_{n=1}^{\floor{d/2}} \FF_{q^n} \bigg\rvert \geq 1 + \Size{ \FF_{q^{\floor{d/2}}} } = 1 + q^{\floor{d/2}}.
\end{equation}
In particular,
\begin{equation*}
  2d \geq 1 + 2^{\floor{d/2}}
\end{equation*}
which is seen to hold only when $d \leq 7$ or $d = 9$. But the $d = 9$ case, upon substituting back into \eqref{eq:basic bd}, yields
\begin{equation*}
  18 = 2d \geq 1 + \Size{ \FF_{q^3} \union \FF_{q^4} } = 1 + q^3 + q^4 - q \geq 23,
\end{equation*}
which is untrue. So we have $2 \leq d \leq 7$. For each $d$, \eqref{eq:q_bound} bounds the value of $q$ by
\begin{equation*}
  q \leq \Floor{(2d - 1)^{1/\floor{d/2}}}.
\end{equation*}
Plugging $d = 2,3,4,5,6,7$ into this fanciful-looking expression yields the bounds of $3$, $5$, $2$, $3$, $2$, and $2$ respectively, precisely as desired.
\end{proof}


\section{Symmetry.}
\label{sec:symmetry}
Proving Theorem \ref{thm:main}\ref{it:main all} is a finite problem: for each $(q,d)$, there are a finite number of irreducibles, and we simply need to write a VID for each one! However, throughout this article, we will strive to prove results conceptually rather than resorting to computation. In this section, we describe a family of symmetries that allow us to consider only a small number of irreducibles per $(q,d)$ pair.

The symmetries are best described in terms of \emph{homogeneous forms} of degree $d$ in two variables $X,Y$. These are in bijection with one-variable polynomials of degree at most $d$, via the standard operations of \emph{homogenization}
\begin{equation*}
  f(x) \mapsto F(X,Y) = Y^d f(X/Y)
\end{equation*}
and \emph{dehomogenization}
\begin{equation*}
  F(X,Y) \mapsto f(x) = F(x,1),
\end{equation*}
and we will frequently identify one-variable polynomials with their homogenizations.

In the homogeneous context, we have the following attractive notion of VID:
\begin{definition}
A \emph{(homogeneous) VID} of degree $d$ over a finite field $\FF_q$ is a sum $F_1(X,Y) + F_2(X,Y) + \cdots + F_r(X,Y) = F(X,Y)$ of $r \geq 2$ nonzero homogeneous forms of degree $d$ over $\FF_q$, satisfying a single property:
\begin{enumerate}[(HV{I}D), leftmargin=\widthof{(HV{I}D)}+\the\labelsep]
  \item\label{it:vid hom} Every irreducible homogeneous form $P(X,Y)$ of degree not exceeding $d/2$ (called an \emph{operative factor}) divides all but exactly one of the $F_i(x)$. This makes it visible that $P(X,Y) \nmid F(X,Y)$.
\end{enumerate}
\end{definition}
We see that \ref{it:vid hom} for each operative factor $P(X,Y)$ corresponds to \ref{it:vid div} for the corresponding inhomogeneous operative factor $p(x)$, \emph{except} for the special operative factor $P(X,Y) = Y$, for which \ref{it:vid hom} corresponds to \ref{it:vid deg}. Thus this notion of VID is entirely compatible with the one above.

Now, the group $\GL_2(\FF_q)$ acts on homogeneous forms of degree $d$ by linear change of variables
\begin{equation*}
  \begin{bmatrix}
    \alpha & \beta \\ \gamma & \delta
  \end{bmatrix}
  \cdot F(X,Y) = F(\alpha X + \gamma Y, \beta X + \delta Y).
\end{equation*}
The scalar matrix $\alpha I$ acts by multiplication by $\alpha^d$, and thus the quotient $\Gamma = \PGL_2(\FF_q)$ acts on the set of forms of degree $d$ up to scaling. Moreover, the $\Gamma$-action preserves irreducibility and acts on VID's of each degree up to scaling (where \emph{scaling} a VID means scaling all its summands $F_i$ by a single scalar $\alpha \in \FF_q^\cross$).

Though we will not need it, the $\Gamma$-action can be described directly on inhomogeneous polynomials as
\begin{equation*}
  \begin{bmatrix}
    \alpha & \beta \\ \gamma & \delta
  \end{bmatrix}
  \cdot f(x) = (\beta x + \delta)^d f\left(\frac{\alpha x + \gamma}{\beta x + \delta}\right);
\end{equation*}
in such form it was studied in \cite{GarAction}.

Let $\II(q,d)$ be the set of irreducible homogeneous forms of degree $d$ over $\FF_q$, up to scaling. These are in bijection with the monic irreducible one-variable polynomials of degree $d$ if $d \geq 2$. The size of $\II(q,d)$ is given by the classical formula (due to Gauss in the case $q$ prime; see \cite{CheboluGauss} for a simple proof in the general case):
\begin{equation} \label{eq:num irreds}
  \Size{\II(q,d)} = \begin{cases}
    \displaystyle \frac{1}{d} \sum_{k|d} \mu(k) q^{d/k}, & d \geq 2, \\
    q + 1, &d = 1
  \end{cases}
\end{equation}
where $\mu(k)$ is the M\"obius function. If one $F \in \II(q,d)$ admits a VID, then so do all irreducibles in the $\Gamma$-orbit of $F$. Therefore we will begin by counting the $\Gamma$-orbits on $\II(q,d)$. We begin with a pair of simple results.
\begin{lem}\label{lem:sim tran}
~
\begin{enumerate}
\item \label{it:st 2} The group $\GA_1(\FF_q)$ of affine transformations of the line (which is also the subgroup of transformations in $\Gamma$ fixing one linear form $Y \in \II(q,1)$) acts simply transitively on $\FF_{q^2} \bs \FF_q$.
\item \label{it:st 3} $\Gamma$ acts simply transitively (by linear fractional transformations) on $\FF_{q^3} \bs \FF_q$.
\end{enumerate}
\end{lem}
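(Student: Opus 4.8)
The plan is to prove both parts by the same three-step template. For a finite group $G$ acting on a finite set $X$, a \emph{free} action (all point stabilizers trivial) together with the numerical equality $\size{X} = \size{G}$ automatically forces the action to be simply transitive, since then every orbit has size $\size{G} = \size{X}$, leaving room for exactly one orbit. So in each case I would: (i) check that the group genuinely acts on the stated set; (ii) count, verifying $\size{X} = \size{G}$; and (iii) show the action is free.

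For part \ref{it:st 2}, note $\size{\GA_1(\FF_q)} = q(q-1)$ while $\size{\FF_{q^2}\bs\FF_q} = q^2 - q$, so (ii) holds. An affine map $x \mapsto ax + b$ with $a \in \FF_q^\cross$, $b \in \FF_q$ carries $\FF_{q^2}\bs\FF_q$ into itself, since $a\xi + b \in \FF_q$ would force $\xi \in \FF_q$; this gives (i). For (iii): if $a\xi + b = \xi$ with $\xi \notin \FF_q$, then $(a-1)\xi = -b \in \FF_q$, which is impossible for $a \neq 1$, and then $a = 1$ gives $b = 0$. As for the parenthetical remark, the form $Y$ has zero locus the point $[1:0] \in \PP^1(\FF_q)$, and one checks directly from the $\GL_2$-action that the matrices sending $Y$ to a scalar multiple of itself are exactly those of the shape $\left(\begin{smallmatrix}\alpha & 0\\ \gamma & \delta\end{smallmatrix}\right)$; modulo scalars these act on $\overline{\FF_q} = \PP^1 \bs \{[1:0]\}$ by $\xi \mapsto (\delta/\alpha)\xi - \gamma/\alpha$, recovering $\GA_1(\FF_q)$.

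For part \ref{it:st 3}, the order of $\Gamma = \PGL_2(\FF_q)$ is $(q+1)q(q-1) = q^3 - q = \size{\FF_{q^3}\bs\FF_q}$, so (ii) holds --- and this exact numerical match is really the whole point of the statement. For (i): elements of $\Gamma$ are linear fractional transformations with $\FF_q$-coefficients, hence commute with the Frobenius $z \mapsto z^q$ on $\PP^1(\overline{\FF_q})$ and so permute $\PP^1(\FF_{q^n})$ for every $n$; since $\infty \notin \FF_{q^3}\bs\FF_q$, they in particular permute $\FF_{q^3}\bs\FF_q = \PP^1(\FF_{q^3})\bs\PP^1(\FF_q)$. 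For (iii), the crucial input is that $3$ is prime, so any $\xi \in \FF_{q^3}\bs\FF_q$ has $[\FF_q(\xi):\FF_q] = 3$. If $g\in\Gamma$, represented by $\left(\begin{smallmatrix}a&b\\c&d\end{smallmatrix}\right)$, fixes such a $\xi$, then (using $c\xi + d \neq 0$, again because $\xi \notin \FF_q$) clearing denominators in $(a\xi + b)/(c\xi + d) = \xi$ yields $c\xi^2 + (d - a)\xi - b = 0$; thus $\xi$ is a root of a polynomial of degree at most $2$ over $\FF_q$, which --- $\xi$ having degree $3$ --- must be the zero polynomial. Hence $c = b = 0$ and $a = d$, i.e.\ $g$ is trivial in $\PGL_2$.

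I do not anticipate any real obstacle: all the steps are elementary. The only points needing a little care are the bookkeeping at the point at infinity (so that ``$g$ fixes $\xi$'' is correctly converted into a polynomial equation over $\FF_q$) and pinning down the stabilizer of $Y$ so that the parenthetical claim in \ref{it:st 2} is genuinely earned. The one substantive ingredient is the cardinality coincidence $\size{\PGL_2(\FF_q)} = \size{\FF_{q^3}\bs\FF_q}$ combined with the primality of $3$ (which rules out intermediate fields between $\FF_q$ and $\FF_{q^3}$), and that deserves to be flagged.
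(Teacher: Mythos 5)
Your proposal is correct and follows essentially the same strategy as the paper: match $\size{G}=\size{X}$ and then show the action is free. The only difference is in how freeness is established --- the paper notes that an element fixing $\xi$ must fix all its Galois conjugates (which are distinct since $\xi$ generates $\FF_{q^2}$, resp.\ $\FF_{q^3}$) and invokes the fact that an affine (resp.\ linear fractional) transformation fixing two (resp.\ three) points is the identity, whereas you clear denominators and use that $\xi$ cannot satisfy a nonzero polynomial of degree less than $[\FF_q(\xi):\FF_q]$; both hinge on the same input, namely that $2$ and $3$ are prime so there are no intermediate fields.
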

\begin{proof} The proof method in each case is similar:
\begin{enumerate}
\item Since $\Size{\GA_1(\FF_q)} = q(q-1) = \Size{\FF_{q^2} \bs \FF_q}$, it is enough to prove that the stabilizer of each point is trivial. Suppose $\gamma \in \GA_1(\FF_q)$ fixes $\xi \in \FF_{q^2} \bs \FF_q$. Then $\gamma$ also fixes $\tau(\xi)$, where $\tau \in \Gal(\FF_{q^2}/\FF_q)$ is a generator. Since $\xi \notin \FF_q$, we have $\xi \neq \tau(\xi)$. An affine transformation that fixes two points must be the identity.
\item Since $\size{\Gamma} = q(q-1)(q+1) = \Size{\FF_{q^3} \bs \FF_q}$, it is enough to prove that the stabilizer of each point is trivial. Suppose $\gamma \in \Gamma$ fixes $\xi \in \FF_{q^3} \bs \FF_q$. Then $\gamma$ also fixes $\tau(\xi)$ and $\tau^2(\xi)$, where $\tau \in \Gal(\FF_{q^3}/\FF_q)$ is a generator. Since $\xi \notin \FF_q$, the three conjugates $\xi$, $\tau(\xi)$, $\tau^2(\xi)$ are distinct. A linear fractional transformation that fixes three points must be the identity. \qedhere
\end{enumerate}
\end{proof}

\begin{lem} \label{lem:one_orbit}
The values of $(q,d)$ for which $\II(q,d)$ consists of a single $\Gamma$-orbit are as follows: all $d \leq 3$, and $(2,4)$ and $(2,5)$.
\end{lem}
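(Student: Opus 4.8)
The plan is to dispose of $d \le 3$ directly and then squeeze $d \ge 4$ into a short list by a counting argument. For $d = 1$ we have $\II(q,1) = \PP^1(\FF_q)$, on which $\Gamma = \PGL_2(\FF_q)$ is transitive, so there is one orbit. For $d = 2$ and $d = 3$ an element of $\II(q,d)$ is the unordered set of roots of a monic irreducible polynomial, i.e.\ a single $\Gal(\FF_{q^d}/\FF_q)$-orbit of size $d$ inside $\FF_{q^d}\setminus\FF_q$; and since every $g \in \Gamma$ is defined over $\FF_q$, its linear-fractional action commutes with the Frobenius, hence carries Galois orbits to Galois orbits. So the transitivity of $\Gamma$ on $\FF_{q^3}\setminus\FF_q$ (for $d = 3$), respectively of $\GA_1(\FF_q) \le \Gamma$ on $\FF_{q^2}\setminus\FF_q$ (for $d = 2$), furnished by Lemma~\ref{lem:sim tran}, upgrades at once to transitivity on $\II(q,d)$.

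For $d \ge 4$ the observation is that a single orbit forces $\size{\II(q,d)} \le \size{\Gamma} = q^3 - q$, and in fact forces $\size{\II(q,d)}$ to divide $q^3 - q$. Feeding the count \eqref{eq:num irreds} into the inequality: for $d = 4$ it rearranges to $(q-4)(q^2-1) \le 0$, i.e.\ $q \le 4$; for $d = 5$ it rearranges to $(q^2-1)(q^2-4) \le 0$, i.e.\ $q = 2$; and for $d \ge 6$ the leading term $q^d/d$ makes it fail for every $q \ge 2$ (a routine estimate, the tightest instance being $(2,6)$ with $\size{\II(2,6)} = 9 > 6$; one can also just factor the polynomial inequality for $d = 6,7$). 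This leaves $(2,4)$, $(3,4)$, $(4,4)$, $(2,5)$, and $(3,4)$ is eliminated immediately because $\size{\II(3,4)} = 18$ does not divide $\size{\PGL_2(\FF_3)} = 24$.

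The three remaining pairs all have $q$ even, and I would handle them with one uniform constraint. An irreducible form $F$ of degree $d$ is separable, so it has $d$ distinct roots in $\PP^1(\overline{\FF_q})$, each of exact degree $d$ over $\FF_q$, and any $g \in \Gamma$ stabilizing $F$ permutes these roots; but every nonidentity element of $\PGL_2(\FF_q)$ is either unipotent (one fixed point on $\PP^1$, $\FF_q$-rational) or semisimple regular (two fixed points, each defined over $\FF_q$ or $\FF_{q^2}$), so for $d \ge 3$ a nonidentity $g$ fixes none of the roots. Hence $g$ acts on the $d$ roots without fixed points, every cycle length dividing the order of $g$. Since $\Gamma \cong S_3$ when $q = 2$, a nonidentity $g$ has order $2$ or $3$, forcing $d$ to be even or divisible by $3$ respectively; as $d = 5$ is neither, the action on $\II(2,5)$ is free, and $\size{\II(2,5)} = 6 = \size{\Gamma}$ then gives a single orbit; as $d = 4$ is not divisible by $3$, no order-$3$ element stabilizes an irreducible quartic over $\FF_2$, so every stabilizer is a $2$-group, every orbit has size $3$ or $6$, and $\size{\II(2,4)} = 3$ leaves only one orbit.

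The hard case is $(4,4)$, and this is where I expect the real work: $\size{\II(4,4)} = 60 = \size{\PGL_2(\FF_4)}$, so neither the size bound nor divisibility decides it, and since $4 = 2+2$ the cycle-length argument does not exclude the involutions of $\PGL_2(\FF_4)$ (which can act as a product of two transpositions on the four roots). I would instead exhibit a nontrivial stabilizer outright. Because $\mathrm{Tr}_{\FF_{256}/\FF_{16}}(1) = 1 + 1^{16} = 0$, there is some $\xi \in \FF_{256}$ with $\xi^{16} = \xi + 1$; necessarily $\xi \notin \FF_{16}$, so $\xi$ has exact degree $4$ over $\FF_4$, and its Galois orbit $\{\xi, \xi^4, \xi+1, \xi^4+1\}$ is preserved by the transformation $x \mapsto x+1$. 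Hence the corresponding monic irreducible quartic over $\FF_4$ has a nonidentity stabilizer, its $\Gamma$-orbit has size at most $30 < 60$, and $\II(4,4)$ consists of more than one orbit. With $(4,4)$ excluded and the other cases settled, the list in the statement is exactly as claimed.
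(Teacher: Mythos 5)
Your proof is correct, and while it follows the paper's skeleton for the confirmed cases, it takes a genuinely different route to rule out the remaining degree-$4$ pairs. For $d\le 3$ you argue exactly as the paper does, via Lemma~\ref{lem:sim tran}. For $(2,4)$ and $(2,5)$ your fixed-point-free cycle-length argument (no nonidentity element of $\PGL_2(\FF_q)$ fixes a point of degree $\ge 3$, so every cycle length exceeds $1$ and divides the element's order) is a close cousin of the paper's Lemma~\ref{lem:stab}, which instead embeds the stabilizer into $\Gal(\FF_{q^d}/\FF_q)\isom C_d$; both yield the needed bound on orbit sizes. The real divergence is in excluding the other pairs with $d=4$. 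The paper proves a uniform statement: for \emph{every} irreducible quartic, the square of Frobenius preserves the cross ratio of the four roots and is therefore realized by an $\FF_q$-rational linear fractional transformation, so every stabilizer has order at least $2$ and $\size{\II(q,4)}>\size{\Gamma}/2$ finishes all $q\ge 3$ at once. You instead narrow to $(3,4)$ and $(4,4)$ by the size and divisibility constraints, dispose of $(3,4)$ because $18\nmid 24$, and handle the genuinely delicate case $(4,4)$ (where $\size{\II(4,4)}=\size{\Gamma}=60$) by exhibiting one Artin--Schreier quartic, built from $\xi$ with $\xi^{16}=\xi+1$, whose root set is visibly preserved by $x\mapsto x+1$. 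Your construction is concrete and self-contained but proves less (one quartic with symmetry rather than all), while the paper's cross-ratio observation is slicker and reveals structure that recurs nowhere else in your argument; the estimate you defer for $d\ge 6$ is at the same level of ``exercise in bounding'' that the paper itself leaves to the reader for $d\ge 5$.
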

The $\Gamma$-action on $\II(q,d)$ is well studied: formulas have been published for the number of fixed points of various elements and subgroups of $\Gamma$ \cite{AhmadiQuadratic, Carlitz, GarAction}. Nevertheless, no one in the literature seems to have posed before the simple question of when $\II(q,d)$ is a single $\Gamma$-orbit. 

\begin{proof}[Proof of Lemma \ref{lem:one_orbit}]
By Lemma \ref{lem:sim tran}, $\Gamma$ transitively permutes the elements of $\FF_{q^2} \bs \FF_q$ (respectively, $\FF_{q^3} \bs \FF_q$) and thus also transitively permutes their minimal polynomials, which comprise $\II(q,2)$ (respectively, $\II(q,3)$). Here we are using that the minimal polynomial of $\xi \in \FF_{q^d}$ is, upon homogenization, the lowest-degree form defined over $\FF_q$ divisible by $X - \xi Y$; and $\Gamma$ acts on these linear forms up to scaling as it does on the elements $\xi\in \FF_{q^d} \union \{\infty\}$ via linear fractional transformations. (The matrix
\[
  g = \begin{bmatrix}
    \alpha & \beta \\ \gamma & \delta
  \end{bmatrix}
\]
does not act by $\xi \mapsto \frac{\alpha\xi + \beta}{\gamma\xi + \delta}$, as is natural, but by $\xi \mapsto \frac{\delta\xi - \gamma}{-\beta\xi + \alpha}$; but this is of no significance for the orbits.) This takes care of the cases where $d \leq 3$.

In the cases $(q,d) = (2,4)$ and $(2,5)$, we can prove the lemma by bounding the point stabilizers in the following standard way, which will also be important later:
\begin{lem} \label{lem:stab}
If $d \geq 3$, then for every $F(X,Y) \in \II(q,d)$, the stabilizer $\Gamma_F$ is a cyclic group of order dividing $d$.
\end{lem}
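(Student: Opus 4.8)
The plan is to exploit the Galois action on the roots of $F$ together with the fact, already used in Lemma~\ref{lem:sim tran}, that a linear fractional transformation is pinned down by its effect on three points. Let $F(X,Y) \in \II(q,d)$, and let $\xi \in \FF_{q^d}$ be a root, in the sense that $X - \xi Y$ divides $F$ over $\overline{\FF_q}$; then the full set of roots of $F$ (as points of $\PP^1$) is the Galois orbit $\{\xi, \tau(\xi), \tau^2(\xi), \dots, \tau^{d-1}(\xi)\}$, where $\tau \in \Gal(\FF_{q^d}/\FF_q)$ is the Frobenius generator, and since $F$ is irreducible of degree $d$ these $d$ points are distinct. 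Any $\gamma \in \Gamma_F$ permutes this root set, so it induces an element of the symmetric group on $d$ letters; this gives a homomorphism $\Gamma_F \to \fS_d$.

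First I would show this homomorphism is injective: if $\gamma \in \Gamma_F$ fixes the root set pointwise, then (since $d \geq 3$) it is a linear fractional transformation fixing at least three points of $\PP^1(\overline{\FF_q})$, hence the identity. So $\Gamma_F$ embeds in $\fS_d$; in particular it is finite, which we of course already knew, but more importantly it acts faithfully on the $d$ roots. Next I would pin down the image. The key point is that $\gamma$ commutes with the Frobenius $\tau$ in a suitable sense: $\gamma$ is defined over $\FF_q$, so it commutes with $\tau$ as maps on $\PP^1(\overline{\FF_q})$, and therefore the permutation of the roots induced by $\gamma$ commutes with the cyclic permutation $(\xi\ \tau\xi\ \cdots\ \tau^{d-1}\xi)$ of the roots induced by $\tau$. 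The centralizer in $\fS_d$ of a $d$-cycle is exactly the cyclic group of order $d$ generated by that $d$-cycle. Hence the image of $\Gamma_F \hookrightarrow \fS_d$ lands inside a cyclic group of order $d$, so $\Gamma_F$ is itself cyclic of order dividing $d$.

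The one step that needs a little care — and which I expect to be the main obstacle to writing this cleanly — is justifying that $\gamma$, as a transformation defined over $\FF_q$, genuinely commutes with $\tau$ when both are viewed as acting on $\PP^1(\overline{\FF_q})$, and that consequently the induced permutations of the finite root set commute. The cleanest way is to note that for a point $P = [a : b] \in \PP^1(\overline{\FF_q})$ and $\gamma$ with entries in $\FF_q$, applying $\tau$ coordinatewise to $\gamma \cdot P$ gives $\gamma \cdot \tau(P)$ because $\tau$ fixes the entries of $\gamma$; so $\tau \circ \gamma = \gamma \circ \tau$ as maps on $\PP^1(\overline{\FF_q})$, and restricting both sides to the Galois-stable root set of $F$ gives the desired commuting permutations. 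Everything else is the standard fact about centralizers of $d$-cycles in $\fS_d$, which can be invoked without proof. This mirrors, and slightly generalizes, the stabilizer computations already carried out in the proof of Lemma~\ref{lem:sim tran}.
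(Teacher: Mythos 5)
Your proof is correct and is essentially the paper's argument: both rest on the faithfulness of the $\Gamma_F$-action on the $d$ roots (a linear fractional transformation fixing three points is the identity, which is where $d \geq 3$ enters) and on the fact that $\gamma$, being defined over $\FF_q$, commutes with Frobenius. The paper packages the conclusion as a natural injection $\Gamma_F \hookrightarrow \Gal(\FF_{q^d}/\FF_q) \isom C_d$, whereas you route through the centralizer of a $d$-cycle in $\fS_d$; these are the same subgroup, so the difference is purely cosmetic.
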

\begin{proof}
Let $f(x)$ be the dehomogenization of $F$. The stabilizer $\Gamma_F = \Gamma_f$ permutes the roots of $f$ in $\FF_{q^d}$ and thus maps naturally into $\Gal(\FF_{q^d} / \FF_q) \isom C_d$, a cyclic group. Any $\gamma \in \Gamma_f$ that fixes one root must fix all the roots, and since there are at least three roots, we must in fact have $\gamma = I$. So $\Gamma_f$ maps isomorphically to a subgroup of $C_d$.
\end{proof}
In the cases $(q,d) = (2,4)$ and $(2,5)$, we get that all orbits have size
\begin{equation*} 
  \frac{\size{\Gamma}}{\Size{\Gamma_f}} \geq \frac{\size{\Gamma}}{\gcd(\size{\Gamma}, d)} = \begin{cases}
    3, \quad d = 4 \\
    6, \quad d = 5.
  \end{cases}
\end{equation*}
By Gauss's formula \eqref{eq:num irreds}, there are exactly $3$ irreducible quartics and $6$ irreducible quintics over $\FF_2$, implying that there is only a single orbit in these cases.

For all other $(q,d)$, there is \emph{more} than one orbit. For $d \geq 5$ this can be seen simply by showing that
\begin{equation*} 
  \size{\II(q,d)} > \size{\Gamma},
\end{equation*}
an exercise in bounding. For $d = 4$, we claim that the point stabilizer of any irreducible $f \in \II(q,4)$ has order at least $2$. Consider the permutation of the roots of $f$ given by the square of Frobenius: $\tau(\xi) = \xi^{q^2}$. As this permutation is in the Klein four group, it preserves the cross ratio of the four roots (the unique invariant
\[
  \frac{(\xi_1 - \xi_2)(\xi_3 - \xi_4)}{(\xi_1 - \xi_3)(\xi_2 - \xi_4)}
\]
of quadruples of distinct points on the projective line) and thus is given by a linear fractional transformation, which, being characterized in a Galois-invariant way, is defined over $\FF_q$. So after verifying the weaker inequality
\begin{equation*}
  \size{\II(q,4)} > \frac{\size{\Gamma}}{2}
\end{equation*}
for $q \geq 3$, it follows that there is more than one orbit.
\end{proof}


\section{Construction of VID's.} \label{sec:examples}
Writing a VID of a given degree is a quite intuitive matter: we place each operative factor in the appropriate summands and repeat factors (or, in rare cases, add higher-degree factors) to bring the total degree of each term up to $d$. For instance, in the case $(q,d) = (4,3)$, $r = 2$, the operative factors are five linear forms $L_1,\ldots, L_5$. Because the degree of each term cannot exceed $3$, they must appear in the distribution
\begin{equation*}
  L_1 L_2 + L_3 L_4 L_5;
\end{equation*}
then, to bring the degree of the first term up to $3$, we add another factor of $L_1$ or $L_2$ (not $L_3$, $L_4$, or $L_5$!) to get the VID shape
\begin{equation*}
  L_1^2 L_2 + \alpha L_3 L_4 L_5,
\end{equation*}
where the relative scaling $\alpha$ as well as the ordering of the forms $L_i$ can freely vary. For clarity's sake we include a formal exposition.
\begin{definition}
A \emph{shape} of degree $d$ is a sum
\begin{equation*}
  \fS = \sum_{i = 1}^r \left( \alpha_i \prod_{j = 1}^{s_i} \fP_{ij} \right)
\end{equation*}
of products of formal factors $\fP_{ij}$, with two attached pieces of data:
\begin{enumerate}
  \item a positive integer $\deg \fP_{ij}$ for each factor, to be thought of as a degree, with each summand having total degree $\sum_j \deg \fP_{ij} = d$;
  \item an equivalence relation $\fP_{ij} \equiv \fP_{k\ell}$ among the factors, respecting degree (that is, such that two equivalent factors have the same degree).
\end{enumerate}
\end{definition}

\begin{definition}
An \emph{instance} of a shape over a field $\FF$ is an actual sum $\sum_i F_i$ given by replacing each formal factor $\fP_{ij}$ with an actual homogeneous polynomial of the specified degree over $\FF$, so that equivalent factors get replaced by the same (or proportional) forms and inequivalent factors by nonproportional forms, and specifying the relative scalings $\alpha_i$ of the terms. Rescaling the entire sum, or fiddling with the scalings of each factor without changing the overall relative scalings of each term, will be considered to yield the same instance. Permuting the terms $F_i$ will also be considered to yield the same instance, if the shape happens to be invariant under some such permutation.
\end{definition}

\begin{definition}
A \emph{visibly irreducible shape (VIS)} of degree $d$ over $\FF_q$ is a shape of degree $d$ in which, for $1 \leq n \leq d/2$, there are exactly $\size{\II(q,d)}$ inequivalent factors of degree $n$ and each appears in all but one summand of the shape.
\end{definition}

These definitions have been arranged to make it obvious that every VID is an instance of a unique VIS, and every instance of a VIS is a VID. We now proceed with the construction.

\subsection{The one-orbit cases.} The cases $(q,d)$ where $\II(q,d)$ is a single $\Gamma$-orbit are the simplest to analyze. One simply has to write a single VIS $\fS$; then its instances (being a $\Gamma$-invariant set) represent all irreducibles in $\II(q,d)$. Moreover, each irreducible is represented the same number of times, which may readily be computed by dividing the number of instances of $\fS$ by $\size{\II(q,d)}$.

Considerations of space prevent us from classifying \emph{all} VIS's, though such a classification is certainly within reach; we limit ourselves to listing one VIS per $(q,d)$ pair. We write shapes as follows: $L$, $Q$, and $C$ with possible subscripts denote linear, quadratic, and cubic factors respectively, those with different subscripts being inequivalent. The formal coefficient $\alpha_i$ of one term can be suppressed, and all the $\alpha_i$ can be suppressed if $q = 2$.

\begin{table}[htbp]
\begin{tabular}{cc|l|l|cc} \\
$q$ & $d$ & $r$ & Example VIS & $\size{\II(q,d)}$ &
  \begin{tabular}{@{}c@{}}
    \# of VID's \\ of this shape \\ per irred
  \end{tabular} \\ \hline
$2$ & $2$ & $2,3$ & $L_1 L_2 + L_2 L_3 + L_3 L_1$ & $1$ & $1$ \\
$2$ & $3$ & any & $L_1^2 L_2 + L_2^2 L_3 + L_3^2 L_1$ & $2$ & $1$ \\
$2$ & $4$ & $2,3,4$ & $L_1^2 L_2 L_3 + Q^2$ & $3$ & $1$ \\
$2$ & $5$ & any & $L_1^4 L_2 + L_3 Q^2$ & $6$ & $1$ \\
$3$ & $2$ & $2$ & $L_1 L_2 + \alpha L_3 L_4$ & $3$ & $2$ \\
$3$ & $3$ & $2,3,4$ & $L_1^3 + \alpha L_2 L_3 L_4$ & $8$ & $1$ \\
$4$ & $3$ & $2$ & $L_1^2 L_2 + \alpha L_3 L_4 L_5$ & $20$ & $3$ \\
$5$ & $3$ & $2$ & $L_1 L_2 L_3 + \alpha L_4 L_5 L_6$ & $40$ & $1$
\end{tabular}
\caption{VID's of irreducibles in the cases where there is a single $\Gamma$-orbit.}
\label{tab:one_orbit}
\end{table}

The value of $r$, the number of terms, is constrained by Lemma \ref{lem:basic bd}. For two $(q,d)$ pairs, namely $(2,3)$ and $(2,5)$, all values of $r \geq 2$ are admissible, and for a striking reason: there is a single term $T = L_1L_2L_3$ (respectively, $T = L_1L_2L_3Q$) that is divisible by all the operative factors, and hence $T$ can be tacked on to a VIS any number of times without affecting visible irreducibility! Using $T$, we can also concoct VIS's such as
\begin{equation} \label{eq:crank VIS}
  L_1 L_2 L_3 + C,
\end{equation}
expressing translation symmetries of the sets $\II(2,3)$ and $\II(2,5)$. It is a matter of taste whether an expression like \eqref{eq:crank VIS} is truly \emph{visibly} irreducible, insomuch as the irreducibility of the sum rests on the irreducibility of a term $C$ of the same degree! Fortunately, this point is of little consequence for us, since every polynomial admitting a VID will turn out to have one like those in Table \ref{tab:one_orbit}, with each summand involving powers of the operative factors only.

In all other cases, $r \leq 4$. In Table \ref{tab:one_orbit}, we have chosen neither the longest nor the shortest VIS but rather the most symmetrical, minimizing the number of distinct instances and thus minimizing the number of VID's of that shape per irreducible, shown in the last column of the table. In six cases, indeed, we can make the VID unique.

The VIS $L_1 L_2 + L_2 L_3 + L_3 L_1$ tabulated for quadratics over $\FF_2$ is more symmetric than the shape $L_1^2 + L_2 L_3$ discovered above and is the first instance of a \emph{visibly rootless Lagrange interpolation.} Recall that Lagrange interpolation is a general method for computing a polynomial of minimal degree attaining specified values at an arbitrary finite list of points by summing polynomials that vanish at all but one of the given points. In the present context, it is easy to see that the products
\begin{equation*}
  L_1 \cdots \hat{L_i} \cdots L_{q+1},
\end{equation*}
consisting of all but one linear form, form a basis for the homogeneous forms of degree $d = q$ over $\FF_q$. If a form $F$ of degree $q$ has no roots over $\FF_q$, then each basis element has nonzero coefficient, and the sum is a \emph{visibly rootless} expansion of $F$. After $q = 2$, the next case $q = 3$ yields the VIS
\begin{equation*}
  L_1 L_2 L_3 + \alpha L_1 L_2 L_4 + \beta L_1 L_3 L_4 + \gamma L_2 L_3 L_4,
\end{equation*}
of maximal length $r = 4$, which represents the $8$ irreducible cubics over $\FF_3$ just by varying the signs $\alpha, \beta, \gamma \in \FF_3^\cross = \{\pm 1\}$. For degree $d = 4$ onward, rootless polynomials are no longer necessarily irreducible. Rootlessness is a less deep notion than irreducibility, and visibly rootless expansions are easily shown to exist for all polynomials provided that $d$ is large compared to $q$.

\subsection{Corollaries using selected VID's.}
The cases $(q,d) = (2,4)$ and $(q,d) = (3,3)$ of Table \ref{tab:one_orbit} are also noteworthy in yielding the following novel relations among irreducible polynomials in those degrees.

\begin{cor}
The three nonzero linear forms $L_i$ and the three irreducible quartic forms $D_i$ over $\FF_2$ are in a canonical bijection respecting the $\Gamma$-action, given by
\begin{equation*}
  L_1 \mapsto D_1 = L_1^2 L_2 L_3 + Q^2 = L_1^4 + L_2 L_3 Q = L_1^2 Q + L_2^2 L_3^2.
\end{equation*}
\end{cor}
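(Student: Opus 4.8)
The plan is to study the map $\phi$ sending a nonzero linear form $L_1$ over $\FF_2$ to the quartic form $\phi(L_1) := L_1^2 L_2 L_3 + Q^2$, where $L_2, L_3$ are the two remaining nonzero linear forms and $Q = X^2 + XY + Y^2$ is the unique irreducible quadratic form (unique by Gauss's formula \eqref{eq:num irreds}), and to show that $\phi$ is a well-defined, $\Gamma$-equivariant bijection onto $\II(2,4)$. Well-definedness is immediate: the product $L_1^2 L_2 L_3$ does not depend on how $L_2, L_3$ are labeled, and $Q$ is canonical, so $\phi$ uses only canonical data. That $\phi(L_1)$ is an irreducible quartic is precisely the assertion that $L_1^2 L_2 L_3 + Q^2$ is a VID: it is an instance of the VIS tabulated for $(q,d) = (2,4)$ in Table~\ref{tab:one_orbit} (each operative factor $L_1, L_2, L_3, Q$ divides exactly one of the two summands), every instance of a VIS is a VID, and a VID of degree $4$ exhibits an irreducible polynomial of degree $4$ since it has no factor of degree $\leq 2$. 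Hence $\phi$ maps the three linear forms into $\II(2,4)$.

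Next I would record the $\Gamma$-equivariance. Here $\Gamma = \PGL_2(\FF_2)$ acts on homogeneous forms by linear substitution, an action preserving degree, irreducibility, and products; it permutes the three linear forms (acting on them as the full symmetric group on the three points of $\PP^1(\FF_2)$) and fixes $Q$ (the only irreducible quadratic up to scaling, and over $\FF_2$ there is no scaling). Therefore, for $g \in \Gamma$,
\[
  g \cdot \phi(L_1) = (g L_1)^2 (g L_2)(g L_3) + (g Q)^2 = (g L_1)^2 (g L_2)(g L_3) + Q^2 = \phi(g L_1),
\]
since $\{g L_2, g L_3\}$ is exactly the pair of linear forms other than $g L_1$. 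This proves $\phi$ is $\Gamma$-equivariant.

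To finish, I would note that $\mathrm{Image}(\phi)$ is a nonempty $\Gamma$-stable subset of $\II(2,4)$, which by Lemma~\ref{lem:one_orbit} is a single $\Gamma$-orbit; hence $\phi$ is surjective. Since by Gauss's formula both its domain and its codomain have exactly three elements, $\phi$ is a bijection, giving the claimed canonical $\Gamma$-respecting correspondence. (One could instead argue injectivity from Lemma~\ref{lem:stab}: if $\phi(L) = \phi(L')$ with $L \neq L'$, then the distinct order-$2$ stabilizers $\Gamma_L$ and $\Gamma_{L'}$ would both fix $\phi(L)$, generating all of $\Gamma$, forcing $\lvert\Gamma_{\phi(L)}\rvert = 6$, contradicting $\lvert\Gamma_{\phi(L)}\rvert \mid 4$.) Finally, I would check that the three displayed expressions for $D_1$ agree; this is a one-line substitution using the $\FF_2$-identities $L_3 = L_1 + L_2$ and $Q = L_1^2 + L_1 L_2 + L_2^2$, under which each of $L_1^2 L_2 L_3 + Q^2$, $L_1^4 + L_2 L_3 Q$, and $L_1^2 Q + L_2^2 L_3^2$ collapses to the common value $L_1^4 + L_1^3 L_2 + L_2^4$. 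I do not anticipate any genuine obstacle; the only point demanding care is to work throughout with homogeneous forms, on which $\Gamma$ acts honestly by substitution, rather than with the twisted action on roots flagged in the proof of Lemma~\ref{lem:one_orbit}.
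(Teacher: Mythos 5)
Your proof is correct, but it is organized differently from the paper's. The paper's argument is purely structural: it observes that there is only one transitive $S_3$-action on a $3$-element set up to isomorphism, so the $L_i$ and the $D_i$ are automatically isomorphic $\Gamma$-sets with the correspondence pinned down by matching stabilizers; it then identifies all three displayed formulas with $D_1$ at once by noting that each is a VID invariant under the transposition swapping $L_2$ and $L_3$, hence must equal the unique quartic fixed by that transposition. You instead build the map $\phi$ explicitly, verify $\Gamma$-equivariance by direct substitution (using that $Q$ is $\Gamma$-fixed), get surjectivity from the single-orbit statement of Lemma~\ref{lem:one_orbit}, and conclude bijectivity by counting; you then check the agreement of the three expressions by expanding them to the common value $L_1^4 + L_1^3L_2 + L_2^4$. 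Both routes rest on the same two ingredients---transitivity of $\Gamma$ on $\II(2,4)$ and the $L_2\leftrightarrow L_3$ symmetry of the formula---but yours trades the paper's appeal to the classification of $S_3$-sets for a hands-on equivariance computation, and trades the stabilizer argument for the equality of the three expressions for a short polynomial calculation. Your parenthetical injectivity argument via Lemma~\ref{lem:stab} (two distinct order-$2$ stabilizers would generate all of $S_3$, contradicting $\lvert\Gamma_{D}\rvert \mid 4$) is in fact the closest point of contact with the paper's stabilizer-based viewpoint. Everything checks out, including your care to work with the substitution action on forms rather than the twisted action on roots.
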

\begin{proof}
It is easy to see that there is only one way for the symmetric group $\Gamma = S_3$ to act transitively on a set of size $3$, up to isomorphism, namely its natural action on three letters. The three letters can be distinguished by their stabilizers, which are the three order-$2$ subgroups of $S_3$.

Consequently, the three $L_i$ and the three $D_i$ comprise isomorphic $\Gamma$-sets. Each of the three VID's listed is invariant under swapping $L_2$ with $L_3$ and thus must represent the unique irreducible quartic $D_1$ fixed by this transposition.
\end{proof}

\begin{cor}
Let $C$ be an irreducible cubic form over $\FF_3$. There is a unique irreducible cubic form $C'$ over $\FF_3$ such that
\begin{itemize}
  \item $C + C'$ is a cube,
  \item $C - C'$ is the product of three distinct linear factors.
\end{itemize}
\end{cor}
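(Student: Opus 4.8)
The plan is to obtain both bulleted properties from the single tabulated VID of the shape $L_1^3 + \alpha L_2 L_3 L_4$. By Lemma~\ref{lem:one_orbit} and Table~\ref{tab:one_orbit}, the irreducible cubic $C$ has a unique VID of this shape, and I would fix it and write $C = M^3 + \Pi$, where $M$ is a linear form and $\Pi$ is a product of three pairwise non-proportional linear forms, the four linear forms involved (namely $M$ together with the three factors of $\Pi$) being exactly the four points of $\PP^1(\FF_3)$. I would then set $C' := M^3 - \Pi$.

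For existence I would verify the two bullets and the irreducibility of $C'$ by hand. First, $C + C' = 2M^3 = (-M)^3$, using $2 = -1 = (-1)^3$ in $\FF_3$, so $C + C'$ is a cube. Second, $C - C' = 2\Pi = -\Pi$ is still a product of three distinct linear factors. Third, $C' = M^3 + (-\Pi)$ involves the same four linear forms, again all of $\PP^1(\FF_3)$, so $C'$ is an instance of the VIS $L_1^3 + \alpha L_2 L_3 L_4$; hence $C'$ is a VID, and therefore irreducible.

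For uniqueness, let $C''$ be any cubic form with $C + C'' = N^3$ for some linear form $N$ (necessarily nonzero) and with $C - C'' = \Pi''$ a product of three pairwise non-proportional linear forms. Since $C'' = \tfrac12\bigl[(C + C'') - (C - C'')\bigr]$, it suffices to show that $N^3$ and $\Pi''$ are determined by $C$. Adding the two hypotheses and using $2C = -C$ in characteristic $3$ gives $C = (-N)^3 + (-\Pi'')$, where $-\Pi''$ is again a product of three distinct linear forms. If $N$ were proportional to one of the three factors of $\Pi''$, this identity would exhibit a linear factor of $C$, contradicting its irreducibility; so the four linear forms appearing are pairwise non-proportional and hence exhaust $\PP^1(\FF_3)$, which makes $C = (-N)^3 + (-\Pi'')$ a VID of $C$ of the tabulated shape. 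By uniqueness of that VID, and because both sides here equal the \emph{same} form $C$ rather than merely agreeing up to scaling, the summands match: $(-N)^3 = M^3$ and $-\Pi'' = \Pi$, that is, $N^3 = -M^3$ and $\Pi'' = -\Pi$. Substituting, $C'' = \tfrac12(N^3 - \Pi'') = \tfrac12(-M^3 + \Pi) = -\tfrac12(M^3 - \Pi) = M^3 - \Pi = C'$, using $-\tfrac12 = 1$ in $\FF_3$.

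I expect the main obstacle to be exactly this last step: one must confirm that the rearranged identity $C = (-N)^3 + (-\Pi'')$ genuinely has the tabulated shape — this is where both the irreducibility of $C$ and the equality $\size{\PP^1(\FF_3)} = 4$ are used — and then that matching it against the existence decomposition as an honest equality of forms (not merely up to scaling) pins down every scalar. All the remaining computations collapse to the arithmetic facts $2 = -1$, $(-1)^3 = -1$, and $\tfrac12 = -1$ in $\FF_3$.
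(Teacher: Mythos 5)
Your proof is correct and takes essentially the same route as the paper: existence by negating the product term in the unique VID $C = L_1^3 + L_2L_3L_4$, and uniqueness by reassembling $C = \tfrac{C+C''}{2} + \tfrac{C-C''}{2}$ into a VID of that same shape and invoking its uniqueness. You are merely a bit more explicit than the paper about why the reassembled sum is a genuine instance of the shape (coprimality via irreducibility of $C$) and about the irreducibility of $C'$ itself.
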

\begin{proof}
By Table \ref{tab:one_orbit}, $C$ can be uniquely decomposed as $L_1^3 + L_2 L_3 L_4$. (Here we are noting that every element of $\FF_3$ is a cube, so the term $L_1^3$, which was a priori only a cube up to scaling, is in fact the cube of a linear form $L_1$; and we scale $L_2$, $L_3$, and $L_4$ so that the equality holds.) We see that $C' = L_1^3 - L_2 L_3 L_4$ satisfies the conditions. Conversely, for any $C'$ satisfying the conditions,
\begin{equation*}
  C = \frac{C + C'}{2} + \frac{C - C'}{2}
\end{equation*}
is a VID of $C$ of the shape $L_1^3 + L_2 L_3 L_4$, the two summands clearly being coprime.
\end{proof}

\subsection{Sextics over $\FF_2$.}
There remain two cases of Theorem \ref{thm:main}\ref{it:main all} for which there are multiple $\Gamma$-orbits. These require more work, for instead of merely displaying one VIS, we must find a VIS for each orbit and carefully verify that they indeed cover all the orbits.

The $(2^6 - 2^3 - 2^2 + 2)/6 = 9$ irreducible sextics over $\FF_2$ are a case in point. Three of these form a \emph{special orbit} with point stabilizer of size $2$, represented by the self-reciprocal polynomial $x^6 + x^3 + 1$. The other six form a \emph{generic orbit} with trivial point stabilizer. (There are many ways to verify the sizes and stabilizers of these orbits.) The VIS
\begin{equation*} 
  F_{L_1} = L_1^2 L_2 L_3 Q + C_1 C_2
\end{equation*}
is symmetric under swapping $L_2$ with $L_3$ or $C_1$ with $C_2$; indeed, it has exactly three instances, which must represent the irreducibles in the special orbit. One finds that there is just one other VIS in this degree,
\begin{equation} \label{eq:sextic_vid_generic}
  F_{L_1, L_2, L_3, C_1, C_2} = L_1^2 L_2 C_1 + L_3 Q C_2.
\end{equation}
It is completely asymmetric and yields $12$ instances, some of which must necessarily have the same sum. But who is to say, except by explicit computation, that they are not just additional VID's for the special orbit?

To shed more light on this question, recall Lenstra's proof \cite{Lenstra5} that the unique VIS for $(q,d) = (5,3)$---appearing in the last row of Table \ref{tab:one_orbit}---represents all irreducible cubics over $\FF_5$. His method is quite different from ours: after observing that there are the same number of irreducible cubics as VID's of this shape, he shows directly that no two of the VID's have the same value. Suppose that
\begin{equation} \label{eq:F5}
  L_1 L_2 L_3 + \alpha L_4 L_5 L_6 = \beta L'_1 L'_2 L'_3 + \gamma L'_4 L'_5 L'_6
\end{equation}
for some constants $\alpha, \beta, \gamma \in \FF_5^\cross$ and permutation $\{L'_1,\ldots,L'_6\}$ of $\{L_1,\ldots,L_6\}$. Then observe that some pair of terms, one on each side of \eqref{eq:F5}, must share at least two linear factors. Assume they share exactly two (the other case is trivial): we can reindex so that the relation \eqref{eq:F5} takes the form
\begin{equation} \label{eq:compare_and_factor}
\begin{aligned}
  L_1 L_2 L_3 + \alpha L_4 L_5 L_6 &= \beta L_1 L_2 L_4 + \gamma L_3 L_5 L_6 \\
  L_1 L_2 (L_3 - \beta L_4) &= L_5 L_6 (\gamma L_3 - \alpha L_4).
\end{aligned}
\end{equation}
Now the common value of the two sides is a cubic form divisible by four distinct linear forms $L_1, L_2, L_5, L_6$, which is impossible. At the heart of the proof is a ``compare and factor'' technique \eqref{eq:compare_and_factor}, by which two similar sums are subtracted term by term and proved to be unequal. This ``compare and factor'' method will enable us to avoid brute-force computation of orbit representatives and their VID's.

We are now ready to prove the $(q,d) = (2,6)$ case of Theorem \ref{thm:main}. In fact, we have the following:
\begin{theorem}
The asymmetric shape \eqref{eq:sextic_vid_generic} represents every irreducible sextic over $\FF_2$.
\end{theorem}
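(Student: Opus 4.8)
The plan is to show that the $12$ instances of the asymmetric shape \eqref{eq:sextic_vid_generic} fall into exactly two clumps: three instances summing to each of the three sextics in the special orbit, and the remaining nine instances summing bijectively (well, in groups that exhaust) the six sextics in the generic orbit—wait, $9$ does not divide evenly among $6$, so the real claim must be that the instances distribute so that every one of the nine irreducibles is hit. First I would simply count instances carefully: the shape $L_1^2 L_2 C_1 + L_3 Q C_2$ has an ordered choice of $(L_1,L_2,L_3)$ from the three nonzero linear forms—that is $3! = 6$ ways—times a choice of which of the two irreducible quadratics is $Q$ (the unique such form, actually there is only one irreducible quadratic over $\FF_2$, so $Q$ is forced), times an ordered choice of $(C_1,C_2)$ from the two irreducible cubics—$2$ ways—giving $6 \cdot 1 \cdot 2 = 12$ instances, and since $q=2$ there are no scalars to vary. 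So there are $12$ VID's of this shape and $9$ target polynomials; the pigeonhole forces at least three coincidences.

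Next, I would identify the special-orbit VID's. Each of the three instances of $F_{L_1} = L_1^2 L_2 L_3 Q + C_1 C_2$ is itself an instance of \eqref{eq:sextic_vid_generic}: expand $L_1^2 L_2 L_3 Q = L_1^2 L_2 \cdot (L_3 Q)$ is not of the right factored shape, but $C_1 C_2$ must be rewritten. Actually the cleanest route is to use the ``compare and factor'' technique directly. Suppose two instances of \eqref{eq:sextic_vid_generic} have the same sum,
\begin{equation*}
  L_1^2 L_2 C_1 + L_3 Q C_2 = (L_1')^2 L_2' C_1' + L_3' Q (C_2').
\end{equation*}
Since there are only three nonzero linear forms and only two irreducible cubics, the multisets of linear and cubic factors overlap heavily. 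I would do a case analysis on how the two sides share factors: if the two degree-$3$-at-the-linear-end terms share enough structure, subtract them; if the degree-$3$ cubic terms coincide as forms, subtract those. In each case subtracting yields an equation of the form (linear stuff)$\,\cdot\,$(something) $=$ (linear-times-$Q$ stuff)$\,\cdot\,$(something), and counting the forced distinct irreducible factors on each side against the total degree $6$ will either produce a contradiction (so the two instances were equal as unordered shape-instances, i.e. the "same instance") or pin down exactly which coincidences can occur. The coincidences that survive should be precisely the ones producing the special orbit, and I expect to find that each special-orbit sextic arises from exactly $2$ of the $12$ instances (accounting for $6$) while each generic-orbit sextic arises from exactly $1$ (accounting for the other $6$), and hence all nine irreducible sextics are represented.

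The main obstacle will be organizing the ``compare and factor'' case analysis cleanly: with a squared linear factor $L_1^2$, a single linear factor $L_2$, the forced quadratic $Q$, and cubics $C_1,C_2$, there are several patterns of overlap to rule out, and unlike Lenstra's symmetric $(5,3)$ situation the shape here has no symmetry to reduce the casework. I would first prove the uniqueness-up-to-coincidence statement for the generic orbit (showing a generic sextic has an essentially unique such VID), then handle the special orbit by exhibiting its VID's explicitly and checking they account for the remaining instances; the degree-$6$ bound $\deg(f_1 f_2) = 12 > $ (number of forced distinct irreducible factors) is the lever that makes every bad case collapse, exactly as in \eqref{eq:compare_and_factor}. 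Finally I would invoke Lemma~\ref{lem:stab} and the known orbit sizes ($3$ and $6$) to confirm that covering one representative of each orbit suffices, completing the $(2,6)$ case of Theorem~\ref{thm:main}.
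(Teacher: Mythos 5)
Your strategy---count the $12$ instances of \eqref{eq:sextic_vid_generic}, use the orbit structure ($3$ special $+$ $6$ generic) together with $\Gamma$-invariance of the set of instances, and apply ``compare and factor'' to sort out coincidences, predicting that each special-orbit sextic occurs twice and each generic-orbit sextic once---matches the paper's conclusion, and your instance count ($3!\cdot 1\cdot 2 = 12$, no scalars since $q=2$) is correct. The tactical difference is that you compare instances of the asymmetric shape directly with one another, whereas the paper pivots through the symmetric shape $F_{L_1}=L_1^2L_2L_3Q+C_1C_2$, whose three instances are already known to represent the special orbit: the single comparison $F_{L_1,L_2,L_3,C_1,C_2}=F_{L_3}$ factors as $L_3Q(C_2+L_1L_2L_3)=C_1(L_1^2L_2+C_2)$, which splits into the identity $C_1=C_2+L_1L_2L_3$ (always true over $\FF_2$) and the condition $L_3Q=L_1^2L_2+C_2$ (true for exactly six of the twelve instances), with the comparisons against $F_{L_1}$ and $F_{L_2}$ quickly eliminated. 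Your route can be made to work, and your endgame is logically sound if carried through: once you know the twelve instances produce exactly three sums hit twice and six hit once, the three doubly-hit sextics form a $\Gamma$-invariant set of size $3$ and so must be the special orbit, and the six singly-hit ones the generic orbit, with no need to identify orbits of explicit polynomials. The genuine shortfall is that the heart of the argument---the case analysis of how two asymmetric instances can share factors---is only announced, not performed; with a repeated factor $L_1^2$, a forced $Q$, and only three linear and two cubic forms to permute, the overlap patterns are exactly where all the work lies, and you do not exhibit even one case's collapsing factorization, nor the explicit special-orbit VID's you promise. Either execute that casework or adopt the paper's device of comparing against the symmetric shape, where one identity does essentially all the work.
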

\begin{proof}
We ask whether a VID of the asymmetric shape $F_{L_1, L_2, L_3, C_1, C_2}$ can equal one of the symmetric shape $F_{L_1}$. In fact, it can: in an application of the ``compare and factor'' method, the potential equality
\begin{align*}
F_{L_1, L_2, L_3, C_1, C_2} &= F_{L_3} \\
L_1^2 L_2 C_1 + L_3 Q C_2 &= L_1 L_2 L_3^2 Q + C_1 C_2 \\
\intertext{can be written as}
L_3 Q (C_2 + L_1 L_2 L_3) &= C_1 (L_1^2 L_2 + C_2),
\end{align*}
which holds if and only if
\begin{enumerate}
  \item\label{it:crank} $C_1 = C_2 + L_1 L_2 L_3$, and
  \item\label{it:vid} $L_3 Q = L_1^2 L_2 + C_2$.
\end{enumerate}
Equation \ref{it:crank} \emph{always} holds (this was noted above in \eqref{eq:crank VIS}). Equation \ref{it:vid} may or may not hold: $L_1^2 L_2 + L_3 Q$ is a visibly irreducible cubic that may or may not be $C_2$. The six instances of the asymmetric shape \eqref{eq:sextic_vid_generic} that satisfy \ref{it:vid} form a $\Gamma$-orbit that represents the special orbit, each sextic therein occurring twice. As for the six instances that do \emph{not} satisfy \ref{it:vid}, we leave it to the reader to apply the ``compare and factor'' method to eliminate the other possibilities
\begin{equation*}
  F_{L_1, L_2, L_3, C_1, C_2} = F_{L_1} \quad \text{and} \quad F_{L_1, L_2, L_3, C_1, C_2} = F_{L_2},
\end{equation*}
and to conclude that these instances necessarily represent the generic orbit. As these six instances form a $\Gamma$-orbit, we obtain that each irreducible in the generic orbit actually has a unique VID.
\end{proof}
This completes the construction of VID's for the two orbits and the solution of the $(q,d) = (2,6)$ case of Theorem \ref{thm:main}. This may seem like a lot of fuss considering the small number of polynomials involved. But we will now apply the same method to the septimic (7th-degree) case.

\subsection{Septimics over $\FF_2$.}
There are $(2^7 - 2)/7 = 18$ irreducible septimics over $\FF_2$. By Lemma \ref{lem:stab}, all point stabilizers are trivial and there are $3$ orbits, each of size $6$. Ideally, we would seek a VIS that represents all $18$ septimics. There are several VIS's in this degree, but unfortunately, none has more than $3! \cdot 1! \cdot 2! = 12$ instances, due to the limited permutations of the $L_i$'s, $Q$, and the $C_i$'s. (No VIS includes a factor of degree $4$ or greater, as then, even for $r = 2$, the total degree of the two terms would be at least $3(1) + 1(2) + 2(3) + 1(4) = 15 > 7 + 7$.) However, the following VID schema is a union of two shapes that are sufficiently similar to allow the ``compare and factor'' method to work both within and between them.
\begin{theorem} \label{thm:septimic}
Any irreducible septimic over $\FF_2$ is uniquely of the visibly irreducible schema
\begin{equation}\label{eq:septimic_vid}
  L_1^i L_2^{4-i} C_1 + L_3^2 Q C_2
\end{equation}
for some orderings $L_1,L_2,L_3$ and $C_1,C_2$ of the operative forms and some integer $i$, $0 < i < 4$, up to the symmetry that takes $i \mapsto 2-i$ and swaps $L_1$ with $L_2$.
\end{theorem}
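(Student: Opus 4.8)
The plan is to run the ``compare and factor'' method of the sextic case, reducing the statement to a short finite check. First I would count instances: the schema \eqref{eq:septimic_vid} has $6$ instances when $i = 2$ and $12$ when $i \in \{1,3\}$ (these two families being identified by the symmetry in the statement), so $18$ in all; and by Gauss's formula \eqref{eq:num irreds}, $\size{\II(2,7)} = (2^7-2)/7 = 18$ as well. Moreover each instance is a genuine VID: over $\FF_2$ the operative factors are the three linear forms, the unique irreducible quadratic $Q$, and the two irreducible cubics, and in every summand-pair of the shape \eqref{eq:septimic_vid} each of these six forms appears in exactly one of the two summands, so \ref{it:vid hom} holds. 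Hence the $18$ instances (modulo the stated symmetry) map to $\II(2,7)$, and since both sets have $18$ elements it suffices to prove this map injective.

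Before doing casework I would trim the labor using $\Gamma = \PGL_2(\FF_2) \isom S_3$: by Lemma \ref{lem:stab} it acts on $\II(2,7)$ freely (point stabilizers have order dividing $\gcd(6,7)=1$) with three orbits of size $6$, and the same bookkeeping shows $\Gamma$ acts freely on the $18$ instances with three orbits (the $i=2$ instances form one, the $i=1$ instances two). The map is $\Gamma$-equivariant, so two instances in a common orbit with equal image must be $\Gamma$-translates of each other by an element fixing that image, hence equal; it is therefore enough to check that two instances lying in \emph{different} orbits never have the same sum, with the first instance allowed to range over three fixed representatives.

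The heart of the matter is the ``compare and factor'' step. Suppose
\[
  L_1^i L_2^{4-i} C_1 + L_3^2 Q C_2 = L_1'^{i'} L_2'^{4-i'} C_1' + L_3'^2 Q C_2',
\]
the primed data being a relabeling (the quadratic must be the same $Q$ on both sides, being the only one). Since there are only two irreducible cubics, either $C_1 = C_1'$ (so $C_2 = C_2'$) or $C_1 = C_2'$ (so $C_1' = C_2$). In the first case, subtracting the two summands containing $C_1$ (resp.\ $C_2$) term by term gives $C_1 \bigl(L_1^i L_2^{4-i} + L_1'^{i'} L_2'^{4-i'}\bigr) = C_2 Q (L_3 + L_3')^2$; as $C_1$ is irreducible and distinct from $C_2$, from $Q$, and from every linear form, both sides must vanish, forcing $L_3 = L_3'$ and then, by unique factorization in $\FF_2[X,Y]$, $L_1^i L_2^{4-i} = L_1'^{i'} L_2'^{4-i'}$ --- which is precisely the assertion that the two instances agree up to the stated symmetry. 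In the cross case $C_1 = C_2'$, I would instead pair $L_1^i L_2^{4-i} C_1$ with $L_3'^2 Q C_2'$ and $L_3^2 Q C_2$ with $L_1'^{i'} L_2'^{4-i'} C_1'$; the identity $C_1 \bigl(L_1^i L_2^{4-i} + L_3'^2 Q\bigr) = C_2 \bigl(L_3^2 Q + L_1'^{i'} L_2'^{4-i'}\bigr)$, together with coprimality of $C_1,C_2$ and a degree count, forces $L_1^i L_2^{4-i} + L_3'^2 Q = C_2 \ell$ and $L_3^2 Q + L_1'^{i'} L_2'^{4-i'} = C_1 \ell$ for one and the same nonzero linear form $\ell$. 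Adding these and invoking the identity $C_1 + C_2 = L_1 L_2 L_3$ (the product of all three linear forms, valid over $\FF_2$, since the two irreducible cubics sum to $x^2+x$) pins $\ell$ and the exponent pattern down to a short list of cases, each of which --- after cancelling a common linear factor or reducing modulo a linear form --- demands an impossible factorization of an explicit cubic or quartic over $\FF_2$. So the cross case cannot occur, the map is injective, and the theorem follows.

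I expect the only real work to be in that last, cross case: the linear-form labelings of the two instances can overlap in several ways (is $L_3'$ equal to $L_3$, or to one of $L_1, L_2$?), and each configuration has to be pushed through ``compare and factor'' until it collapses to a numerical impossibility. The count, the reduction to injectivity, and the clean dichotomy on the cubic factors are all routine; it is exactly this finite but fiddly bookkeeping that ``compare and factor'' is meant to keep under control, sparing us a brute-force tabulation of all the orbit representatives and their decompositions.
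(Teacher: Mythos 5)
Your setup---eighteen instances of the schema, eighteen irreducible septimics, reduction to injectivity, then ``compare and factor'' with a dichotomy on how the two cubics are matched---is in substance the paper's own, and your treatment of the matched case $C_1'=C_1$ is complete and correct: from $C_1\bigl(L_1^iL_2^{4-i}+{L_1'}^{i'}{L_2'}^{4-i'}\bigr)=C_2Q(L_3+L_3')^2$, unique factorization forces both sides to vanish, hence the two instances coincide. (Two side remarks: the $\Gamma$-orbit reduction is harmless but you never actually use it, and the symmetry in the statement should read $i\mapsto 4-i$, as you silently and correctly assume.) The gap is the crossed case $C_1=C_2'$, which you rightly single out as ``the only real work'' and then leave as a plan. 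Your route---derive $L_1^iL_2^{4-i}+{L_3'}^2Q=\ell C_2$ and $L_3^2Q+{L_1'}^{i'}{L_2'}^{4-i'}=\ell C_1$, add them, and invoke $C_1+C_2=L_1L_2L_3$---does eventually close, but several exponent patterns survive the addition step (e.g.\ $L_1L_2^3+L_1^2L_2^2=L_1L_2L_3\cdot L_2$ is a true identity) and force you back to the individual relations; as written, nothing is actually eliminated.

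The crossed case closes in one stroke from the single relation $L_1^iL_2^{4-i}+{L_3'}^2Q=\ell C_2$: since $C_2$ is an irreducible cubic, the right-hand side is divisible by \emph{exactly one} of the three linear forms. If $L_3'=L_3$, the left-hand side is divisible by \emph{none}: $L_1$ and $L_2$ divide the first term but not the second, and $L_3$ divides the second but not the first. If instead $L_3'\in\{L_1,L_2\}$, say $L_3'=L_1$, the left-hand side is divisible by \emph{at least two}: $L_1$ divides both terms, and $L_3$ also divides the sum because at the $\FF_2$-point where $L_3$ vanishes each of the two terms is nonzero, hence equal to $1$, so the sum vanishes there. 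Either way the count of linear divisors is wrong, with no enumeration of cases. This divisibility count is the one genuinely non-routine step of the proof, and it is the step your proposal defers; I recommend replacing the addition-plus-crank-identity plan with it.
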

\begin{proof}
Because there are $\frac{3! \cdot 2! \cdot 3}{2} = 18$ VID's within the schema, it is enough to show that no two have equal sum. Using the ``compare and factor'' method, we make the following observation: If $F_1,\ldots,F_4$ are quartics (not necessarily irreducible or even distinct) such that
\begin{equation*}
  F_1 C_1 + F_2 C_2 = F_3 C_1 + F_4 C_2
\end{equation*}
but $F_1 \neq F_3$, then from the factorization
\begin{equation*}
  (F_1 - F_3) C_1 = (F_4 - F_2) C_2,
\end{equation*}
we get that $F_1 - F_3 = L C_2$ and $F_4 - F_2 = L C_1$ for some $L \in \{L_1,L_2,L_3\}$. In particular, $F_1 - F_3$ is divisible by exactly one $L_i$ and not by $Q$.

Assume that
\begin{equation*}
  L_1^i L_2^{4-i} C_1 + L_3^2 Q C_2 = {L'_1}^j {L'_2}^{4-j} C'_1 + {L'_3}^2 Q C'_2
\end{equation*}
are two distinct VID's for the same irreducible septimic within the schema \eqref{eq:septimic_vid}, where $\{L'_1,L'_2,L'_3\}$ and $\{C'_1,C'_2\}$ are permutations of $\{L_1,L_2,L_3\}$ and $\{C_1,C_2\}$, respectively. If $C'_2 = C_2$, then the coefficients $F_2$, $F_4$ of $C_2$ on each side are both divisible by $Q$, which is impossible by the observation above. So $C'_2 = C_1$ and $C'_1 = C_2$. But now the difference of the coefficients of $C_1$ on each side is
\begin{equation*}
  F_1 - F_3 = L_1^i L_2^{4-i} + {L'_3}^2 Q.
\end{equation*}
If $L'_3 = L_3$, then $F_1 - F_3$ is divisible by none of the $L_i$. (Indeed, it is a visibly irreducible quartic.) But if $L'$ is one of the other $L_i$, say $L_1$, then $F_1 - F_3$ is divisible by both $L_1$ and $L_3$ since $L_1$ divides both terms and $L_3$ divides neither term. So in no case can $F_1 - F_3$ be divisible by exactly one $L_i$, completing the proof of the theorem and of Theorem \ref{thm:main}\ref{it:main all}.
\end{proof}

\section{Quintics over $\FF_3$.} As promised in Theorem \ref{thm:main}, this is the unique case in which we get VID's for some but not all of the irreducibles of one degree over a field. There are $(3^5 - 3)/5 = 48$ irreducible quintics over $\FF_3$, up to scaling. The group $\Gamma$ has size $24$ (indeed, it is isomorphic to $S_4$, permuting the four $L_i$ freely). The point stabilizer of any irreducible quintic is trivial by Lemma \ref{lem:stab}, so there are two orbits, each of size $24$. In writing a VID, we note that equality holds in Lemma \ref{lem:basic bd} with $r = 2$, so we must have just two terms $f_1$, $f_2$ such that
\begin{equation*}
  f_1 f_2 = \alpha L_1 L_2 L_3 L_4 Q_1 Q_2 Q_3
\end{equation*}
for some $\alpha \in \FF_3^\cross$. There is but a single way, up to reindexing, to split the degree-$10$ polynomial on the right into the product of two quintics: thus there is only a single VIS
\begin{equation*}
  F_{L_1, Q_1, \alpha} = L_1 Q_2 Q_3 + \alpha L_2 L_3 L_4 Q_1.
\end{equation*}
It has $24$ instances (there are $4$ choices for $L_1$, $3$ for $Q_1$, and $2$ for $\alpha$). We conclude that they are the $24$ quintics in one orbit. As there are no other VIS's, the $24$ quintics in the other orbit do not admit a VID. This completes the proof of Theorem \ref{thm:main}.


\section{VID's without visible degree.} \label{sec:vid no deg}
To return to our starting point, our notion of VID of a one-variable polynomial included a condition on the degrees of the summands \ref{it:vid deg}, at first seemingly arbitrary, but ultimately explained in terms of the corresponding condition on homogeneous polynomials \ref{it:vid hom} respecting their richer $\Gamma$-symmetry. This article would be incomplete without a few remarks on what would go differently if \ref{it:vid deg} were removed.
 
The proof of Lemma \ref{lem:basic bd} remains unchanged and yields the weaker bound
\begin{equation*}
  dr \geq (r-1) \bigg\lvert \bigcup_{n=1}^{\floor{d/2}} \FF_{q^n} \bigg\rvert
\end{equation*}
in which a summand of $r-1$ is omitted from the right-hand side. Continuing in the manner of Lemma \ref{lem:when bd} yields the same possible $(q,d)$ pairs, with one addition: $(q,d) = (4,2)$. A quadratic over $\FF_4$ cannot have a VID in the sense used throughout this article, but a sum of the shape
\begin{equation} \label{eq:vid_4,2}
  L_1 L_2 + \alpha L_3 L_4,
\end{equation}
omitting the exceptional linear form $L_5(X,Y) = Y$, can be irreducible. Sums of this shape do not have a $\Gamma$-action, but there is an action by the stabilizer $\Gamma_{\infty}$ of $L_5$, which is none other than the group $\GA_1(\FF_4)$ of affine transformations of $\FF_4$. By Lemma \ref{lem:sim tran}\ref{it:st 2}, $\Gamma_{\infty}$ permutes $\II(4,2)$ transitively. There are $\size{\II(4,2)} = (4^2 - 4)/2 = 6$ irreducibles. The shape \eqref{eq:vid_4,2} has $9$ instances (fixing $L_5$), of which $3$ have the value $L_5^2$ up to scaling (one choice of $\alpha$ for each choice of $L_1,L_2,L_3,L_4$). So we have the following uniqueness theorem.
\begin{theorem}
An irreducible quadratic $f(x)$ over $\FF_4$ can be expressed uniquely in the form
\begin{equation*}
  \alpha(x - a)(x - b) + \beta(x - c)(x - d)
\end{equation*}
up to commutativity, where $\{a,b,c,d\} = \FF_4$ and $\alpha, \beta \in \FF_4^\cross$ are distinct scalars.
\end{theorem}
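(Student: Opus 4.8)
The plan is to repackage the bookkeeping done just above the statement into a single $\Gamma_\infty$-equivariant bijection, from which the theorem falls out with no further computation. We already know that $\Size{\II(4,2)} = 6$, that the shape \eqref{eq:vid_4,2} has exactly $9$ instances (with $L_5$ held fixed), and that the $3$ instances whose two terms carry the same scalar are proportional to $L_5^2$: in characteristic $2$, $\alpha(x-a)(x-b) + \alpha(x-c)(x-d) = \alpha(a+b+c+d)x + \alpha(ab+cd) = \alpha(ab+cd)$, a nonzero constant, since $a+b+c+d = \sum_{u\in\FF_4}u = 0$. So there are exactly $6$ instances with $\alpha\neq\beta$, the same number as there are irreducible quadratics.

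First I would confirm that each of these $6$ non-degenerate instances really is an irreducible quadratic, so that ``take the sum'' is a map $\phi$ into $\II(4,2)$. The defining property of the shape forces each of the four operative forms $L_1,\dots,L_4$ (the forms $X - aY$ with $a\in\FF_4$) to divide exactly one term, hence not to divide the sum $F$; so $F$ has no linear factor over $\FF_4$ apart from possibly $Y$. But $Y\mid F$ would mean the leading coefficients cancel, i.e.\ $\alpha=\beta$, which is excluded; so $F$ has degree exactly $2$ and no linear factor at all, hence is irreducible. The map $\phi$ is well defined up to scaling, and by the compatibility of the $\Gamma$-action with addition of forms (Section \ref{sec:symmetry}) it is equivariant for the subgroup $\Gamma_\infty = \GA_1(\FF_4)$ that preserves \eqref{eq:vid_4,2}: this subgroup fixes $L_5$ and permutes $\{L_1,\dots,L_4\}$, so it carries instances to instances, and since ``$F\propto L_5^2$'' is a $\Gamma_\infty$-invariant condition it permutes the $6$ non-degenerate instances among themselves.

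Then the crux is a counting argument. By Lemma \ref{lem:sim tran}\ref{it:st 2}, $\Gamma_\infty$ acts transitively on $\FF_{16}\bs\FF_4$, hence on the minimal polynomials of those elements, i.e.\ transitively on $\II(4,2)$. An equivariant map from a nonempty set to a transitive $G$-set is automatically surjective, its image being a nonempty $G$-stable subset; so $\phi$ is onto, and since source and target both have size $6$ it is a bijection. Unwinding this: every irreducible quadratic over $\FF_4$ is the sum of exactly one instance of \eqref{eq:vid_4,2}, and the identifications built into ``instance'' --- reordering the two factors inside a term, swapping the two terms, and rescaling the whole polynomial --- are precisely the ``up to commutativity'' of the statement, while ``non-degenerate instance'' is precisely ``$\alpha,\beta$ distinct.'' Existence and uniqueness thus come out together.

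I expect the only real obstacle to be administrative rather than mathematical: one must remember that \eqref{eq:vid_4,2} is \emph{not} a VID in the strict sense, since it omits the operative factor $L_5$, so the full group $\Gamma$ does not act on its instances --- only $\Gamma_\infty$ does --- and one must be careful that the three degenerate instances cannot be confused with the six good ones under $\Gamma_\infty$ (they cannot, as just noted). Once that is pinned down, the equality $6 = 6$ does all the work and no ``compare and factor'' computation is needed; a direct verification that the six sums are pairwise distinct would also prove the theorem, but would be appreciably messier.
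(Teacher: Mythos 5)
Your proposal is correct and follows essentially the same route as the paper: count the $9$ instances of the shape, discard the $3$ with equal scalars (whose sum is a nonzero constant, i.e.\ proportional to $L_5^2$), and combine the transitivity of $\Gamma_\infty = \GA_1(\FF_4)$ on $\II(4,2)$ from Lemma \ref{lem:sim tran}\ref{it:st 2} with the count $6=6$ to get a bijection. You merely make explicit some steps the paper leaves implicit (the characteristic-$2$ computation for the degenerate instances, the irreducibility of the non-degenerate sums, and the equivariance/surjectivity argument), which is a welcome amplification rather than a different proof.
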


\ignore{
This is far from the only natural generalization of a VID. For instance, for any orderings of the irreducible cubics $C_1, C_2$ and quartics $D_1, D_2, D_3$ over $\FF_2$, the ratio
\begin{equation*}
  \frac{Q^2 C_1 D_1 + C_2 D_2 D_3}{L_1 L_2 L_3} \text{ TODO WRONG}
\end{equation*}
is an irreducible polynomial of degree $8$. Who knows where the search for pleasing representations of irreducible polynomials will end?
}

\section{Conclusion.}
Lenstra's ``compare and factor'' method, coupled with an awareness of the symmetry of the situation, demonstrate for us that VID's of the same degree tend to ``repel'' each other and fill out all irreducibles of a given degree. However, the obtainable families of irreducibles peter out after a finite list, and no case comes close to exceeding the $4 \cdot \size{\II(5,3)} = 160$ cubics in Lenstra's $\FF_5$ example. So the question remains: Is the ``compare and factor'' method, for all its beauty, applicable only to a finite total number of objects? Or are there structures of higher degree, perhaps even in more dimensions, that can be handled in a subtly analogous way?

\section{Acknowledgments.}
This research was supported by the National Science Foundation (grant \#DGE-1646566). I thank my advisor, Manjul Bhargava, for communicating the research question.


\ignore{
\begin{biog}
\item[Evan O'Dorney] is a graduate student at Princeton University. He completed his undergraduate degree at Harvard College and completed Part III at Cambridge University.
\begin{affil}
Department of Mathematics, Princeton University, Fine Hall, Washington Road, Princeton NJ 08544\\
eodorney@princeton.edu
\end{affil}
\end{biog}
\vfill\eject
}
\end{document}